\newcommand{\N}{\mathbb{N}}
\begin{document}

\title{Edge-sum distinguishing labeling}
\author{Jan Bok \inst{1} \and Nikola Jedličková \inst{2}}
\institute{
Computer Science Institute, Faculty of Mathematics and Physics, Charles University, Malostransk\'{e} n\'{a}m\v{e}st\'{i} 25, 11800, Prague, Czech Republic. Email: \email{bok@iuuk.mff.cuni.cz}
\and
Department of Applied Mathematics, Faculty of Mathematics and Physics, Charles University, Malostransk\'{e} n\'{a}m\v{e}st\'{i} 25, 11800, Prague, Czech Republic. Email: \email{jedlickova@kam.mff.cuni.cz}
}

\maketitle

\begin{abstract}
We study \emph{edge-sum distinguishing labeling}, a type of labeling
recently introduced by Tuza in [Zs. Tuza, \textit{Electronic Notes in Discrete Mathematics} 60, (2017), 61-68] in context of labeling games.

An \emph{ESD labeling} of an $n$-vertex graph $G$ is an injective mapping of
integers $1$ to $l$ to its vertices such that for every edge, the sum of the
integers on its endpoints is unique. If $l$ equals to $n$, we speak about a
\emph{canonical ESD labeling}.

We focus primarily on structural properties of this labeling and show for several
classes of graphs if they have or do not have a canonical ESD labeling.
As an application we show some implications of these results for games based
on ESD labeling. We also observe that ESD labeling is closely connected to the well-known
notion of \emph{magic} and \emph{antimagic} labelings, to the \emph{Sidon sequences} and to \emph{harmonious labelings}.
\\
\\
\noindent{\bf 2010 Mathematics Subject Classification:}\enspace05C78\\
\noindent{\bf Keywords:}\enspace graph theory, graph labeling, games on graphs
\end{abstract}

\section{Introduction and preliminaries}

Graph labeling is a vivid area of combinatorics which started in the middle of
1960's. Much of the area is based on results of Rosa \cite{rosa1967} and of
Graham and Sloane \cite{graham1980additive}. Since then, over 200 different
labelings were introduced. We refer to Gallian's survey \cite{dynamic}, citing
over 2500 papers, gathering most of the results in the area. Applications of
labeling are both theoretical (Rosa introduced so-called \emph{graceful}
labelings to attack Ringel's conjecture on certain graph decompositions) and
practical (for example the frequency assignment problem
\cite{hale1980,vangraph,jha2000}).

We study \emph{edge-sum distinguishing} (abbreviated as ESD) labeling,
introduced by Tuza \cite{tuza2017graph} in 2017. Tuza's primarily concern was
to study several combinatorial games connected to this labeling. Our main
objective is to study structural properties of this labeling on its own.
However, as our secondary objective, we also give some results on game
variants of edge-sum distinguishing labeling.

\paragraph{Structure of the paper.} In the rest of this section we review
basic definitions and show a broader context of ESD labeling to other existing
notions in combinatorics. The second section deals with structural properties
of ESD labeling. For various well-known classes of graphs we show if they have
a canonical ESD labeling or not. In the third section we are concerned with
game variants, the original motivation of Tuza. Finally, in the last section
we summarize our results and propose some open problems.

\paragraph{Notation.}

We use the notation of West \cite{west2001introduction}. All graphs in the
paper are finite, undirected, connected and without multiple edges, unless
we say otherwise.

\subsection{Basic definitions}

We need to formally define what graph labeling is. We will need vertex
labelings only.

\begin{definition}
Let $G = (V,E)$ be a graph and let $L \subseteq \mathbb{N}$ be a set of labels. Then a mapping $\phi: V \rightarrow L$ is called a \emph{vertex labeling}.
We further say that vertex labeling is \emph{canonical} if $|V| = |L|$.
\end{definition}

We will often refer to edge-weights, induced by a vertex labeling.

\begin{definition}
Let $G=(V,E)$ be a graph and $\phi$ a vertex labeling on $G$. The \emph{edge-weight}
of an edge $xy$ is defined as $w_\phi(xy):=\phi(x) + \phi(y)$.
\end{definition}

Now we can finally introduce a definition of \emph{edge-sum distinguishing
labeling}.

\begin{definition}
Let $G=(V,E)$ be a graph and $L = \{1, \ldots, l\}$, $l \in \mathbb N$. A vertex labeling $\phi: V \rightarrow L $ is called \emph{edge-sum distinguishing labeling}
(\emph{ESD labeling}) if $\phi$ is injective and if
$$\forall e, f \in E: e \neq f \rightarrow w_\phi(e) \neq w_\phi(f).$$
\end{definition}

We note that no ESD labeling exists in case $|L| < |V|$. We call a special case
when $|L| = |V|$ a \emph{canonical ESD labeling}.

\begin{example}
Consider a path $P_n$ and denote its vertices consecutively $v_1,\ldots,v_n$.
Choose a labeling $\phi(v_i) = i$. Clearly, this is an ESD labeling and even a canonical ESD labeling.
\end{example}

\subsection{Connections to existing notions}

\paragraph{Edge-antimagic vertex labeling.} 

Following the usual terminology in the area of graph labelings, one could name
canonical ESD labelings also as \emph{edge-antimagic vertex labelings}. To
illustrate this, let us recall that an \emph{antimagic labeling} of a graph
with $m$ edges and $n$ vertices is a bijection from the set of edges to the
integers $1,\ldots,m$ such that all $n$ vertex sums are pairwise distinct,
where a vertex sum is the sum of labels of all edges incident with the same
vertex. Antimagic labeling were introduced as a natural generalization of
magic labelings. We refer the reader to
\cite{bavca2007edge,simanjuntak2000two} for more information on antimagic
labelings and to \cite{kotzig1970magic,kotzig1972magic,wallis2001magic} for a
literature on magic labelings.

To our best knowledge, edge-antimagic vertex labelings were not studied yet.

\paragraph{Super edge-magic total labelings.} 

A \emph{super edge-magic total labeling} is an injection $f: V \cup E \to
\{1, 2, \ldots , |V| + |E|\}$ such that the weight of every edge $xy$ defined
as $w(xy) = f (x) + f (y) + f (xy)$ is equal to the same magic constant $m$
and the vertex labels are the numbers $1, 2, \ldots , |V|$. One can observe
that such labeling implies an edge-sum distinguishing labeling in a natural
way. If we remove the labels of edges, the edge-weights now form an arithmetic
progression. We can say about the resulting labeling even more; it is an
$(a,1)$-edge antimagic vertex labeling.

An \emph{$(a,d)$-edge antimagic vertex labeling} is a one-to-one mapping
$f$ from $V(G)$ onto $\{1,2,\ldots,|V| \}$ with the property that
for every edge $xy \in E(G)$, the edge-weight set is equal to
$$\{f(x)+f(y): x,y\in V(G) \} = \{a,a+d,a+2d,\ldots,a+(|E|+1)d \},$$
for some $a>0, d\geq 0$. This definition comes from \cite{sugeng2013construction}.

\paragraph{Sidon sequences.}  The Sidon sequences were introduced by Simon Sidon
in 1932 \cite{sidon1932satz}. We refer the reader to a dynamically 
updated survey of O’Bryant \cite{o2004complete}.
The formulation of the following definition comes from the survey.

\begin{definition}
	A \emph{Sidon sequence} is a sequence of integers $a_1 < a_2 < \ldots$ with
	the property that sums $a_i+a_j \,\, (i \leq j),$ are distinct.
\end{definition}

ESD labeling can be reformulated in a similar fashion.

\begin{definition}
	An \emph{ESD labeling} of a graph $G=(V,E)$, where $V=\{1,\ldots,n\}$, is a sequence of
	integers $a_1 < a_2 < \ldots$ with the property that
	sums $a_i+a_j, \, i \leq j,\, (i,j) \in E,$ are distinct and $a_1 = 1$.
\end{definition}

With this new definition in hand, we see that ESD labeling
is in some sense a generalization of the Sidon sequence.
The difference that $a_1 = 1$ in the definition of ESD labeling could be
easily dropped (but it is convenient for this paper). Also, one can observe
that without this condition, the original Sidon sequences are ESD labelings 
of a sufficiently large complete graph with loops added to each vertex.
However, again for our convenience, we consider only loopless graphs in
this paper.

\paragraph{Harmonious labeling.}
\emph{Harmonious labeling} was introduced by Graham and Sloane \cite{graham1980additive}.
We say that graph $G$ with $k$ edges is \emph{harmonious} if its vertices
can be labeled injectively with integers modulo $k$ so that the sum of the labels
of its endpoints modulo $k$ is unique.

The difference between harmonious and ESD labeling is that we do not take
vertex labels and edge labels modulo number of edges in ESD case. In fact, ESD labelings
and harmonious labelings behave differently. For example, it is
conjectured that trees are harmonious and it is known that not all cycles are
harmonious \cite{guy2013unsolved}. For comparison, we show that
all trees and cycles have a canonical ESD labeling.

\section{Structural results}

\subsection{Necessary condition}

\begin{theorem} \label{thm:nec}
If a graph $G=(V,E)$ such that $|V| > 1$ has a canonical ESD labeling, then the inequality $|E| \leq 2|V|-3$ holds.
\end{theorem}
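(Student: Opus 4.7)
The plan is to bound the number of edges by counting the possible edge weights. Under a canonical ESD labeling, $\phi$ is a bijection from $V$ to $\{1,2,\ldots,n\}$, so for every edge $uv\in E$ the weight $w_\phi(uv)=\phi(u)+\phi(v)$ is a sum of two \emph{distinct} elements of $\{1,\ldots,n\}$.

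First I would observe that the minimum possible edge weight is $1+2=3$ and the maximum possible edge weight is $(n-1)+n=2n-1$. Hence every edge weight lies in the set $\{3,4,\ldots,2n-1\}$, which contains exactly $2n-3$ integers. Since the ESD condition requires that all $|E|$ edge weights are pairwise distinct, the map $w_\phi : E \to \{3,\ldots,2n-1\}$ is injective, which immediately yields $|E|\leq 2n-3$.

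There is essentially no obstacle here: the argument is a one-line pigeonhole-style count once one notices that the label set $\{1,\ldots,n\}$ forces the weights into a window of length $2n-3$. The only small subtlety is ensuring $n>1$ so that the window $\{3,\ldots,2n-1\}$ is nonempty (respectively that distinct labels exist for the endpoints of any edge), which is exactly the hypothesis of the theorem.
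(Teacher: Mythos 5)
Your proof is correct and follows exactly the same approach as the paper: the canonical labels $\{1,\ldots,n\}$ force every edge weight into the window $\{3,\ldots,2n-1\}$ of size $2n-3$, and injectivity of the weights gives the bound.
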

\begin{proof}
	We claim that every canonical ESD labeling of an $n$-vertex graph has at most
	$2n-3$ different edge-weights.

	To prove this, observe that the smallest possible edge-weight in such
	labeling is 3 and
	the largest possible is $2n-1$. Also, the edge-weights  of $G$ form a subset of the
	set $\{3,\ldots,2n-1 \}$ which is of the size $2n-3$. This proves the claim.

	Now if a graph $G$ has more than $2|V|-3$ edges we can use our claim
	and by the pigeonhole principle, we have two edges with the same
	weight, a contradiction.
\qed
\end{proof}

Now we will show that this bound is tight.

\begin{theorem}
	For every $n \in \N, n > 1,$ there exist an $n$-vertex graph $G_n$ with $|E(G_n)| = 2n - 3$ which has a canonical ESD labeling.
\end{theorem}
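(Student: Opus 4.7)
The plan is to exhibit $G_n$ explicitly as a ``double-hub'' graph on vertex set $V(G_n) = \{v_1, v_2, \ldots, v_n\}$, taking
\[
E(G_n) \;=\; \{v_1 v_i : 2 \leq i \leq n\} \,\cup\, \{v_i v_n : 2 \leq i \leq n-1\},
\]
so that every vertex other than $v_1$ and $v_n$ is adjacent to both hubs. A direct count gives $|E(G_n)| = (n-1) + (n-2) = 2n-3$, matching the upper bound of Theorem~\ref{thm:nec}. The labeling I would use is the obvious one: $\phi(v_i) := i$.

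To verify that $\phi$ is a canonical ESD labeling, I would split the edge set into three disjoint pieces and compute the corresponding weights. Edges of the form $v_1 v_i$ with $2 \leq i \leq n-1$ contribute the weights $1+i$, yielding $\{3, 4, \ldots, n\}$. Edges of the form $v_i v_n$ with $2 \leq i \leq n-1$ contribute the weights $n+i$, yielding $\{n+2, n+3, \ldots, 2n-1\}$. Finally, the single edge $v_1 v_n$ contributes the weight $n+1$. These three sets are pairwise disjoint and their union is precisely $\{3, 4, \ldots, 2n-1\}$, so every possible edge-weight listed in the proof of Theorem~\ref{thm:nec} is realized exactly once. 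In particular $\phi$ is injective (since the labels are $1, \ldots, n$) and no two edges share a weight.

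I do not anticipate a serious obstacle, since the proof of Theorem~\ref{thm:nec} already identifies the pool of available weights as the set $\{3, 4, \ldots, 2n-1\}$ of size $2n-3$; the only thing needed is to produce a graph that hits every element of this pool exactly once, and the two-hub construction does so cleanly because the ``small'' weights can only be written using the label $1$ while the ``large'' weights can only be written using the label $n$, with $n+1$ bridging the two regimes via the edge $v_1 v_n$. It only remains to check the degenerate cases $n = 2$ (where $G_2$ is the single edge $v_1 v_2$) and $n = 3$ (where $G_3$ is the triangle $K_3$), both of which are immediate from the same description.
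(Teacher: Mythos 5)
Your proposal is correct and is essentially the paper's own construction: your ``double-hub'' graph is exactly $K_{2,n-2}$ with an edge added between the two vertices of the size-$2$ part, and your labeling ($1$ and $n$ on the hubs, $2,\ldots,n-1$ on the rest) and weight computation coincide with the paper's. No substantive difference.
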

\begin{proof}
	For $G_2$ take $K_2$ and for $G_3$ take $K_3$. These cases are trivial.

	For $n>3$, take a complete bipartite graph $K_{2,n-2}$ and add
	an edge between the two vertices of the part of size 2. See Figure \ref{fig:tight}
	for an example.

\begin{figure}
\centering
\includegraphics[scale=0.8]{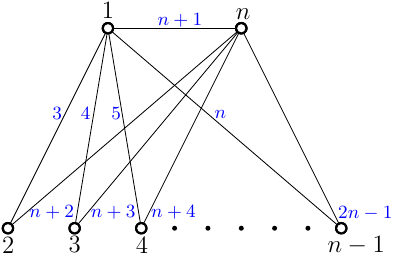}
\caption{An example of an ESD graph with $2n-3$ edges.}
\label{fig:tight}
\end{figure} 

	We will show that this graph has a canonical ESD labeling. We will denote
	$x_1,x_2$ the vertices of the part of size 2 and $y_1,\ldots,y_{n-2}$ the vertices
	of the other part.

	Now we define a labeling $\phi$ in the following way.
	\begin{itemize}
		\item Let $\phi(x_1) = 1$ and $\phi(x_2) = n$.
		\item Let $\phi(y_i) = i+1$ for $1 \leq i \leq n-2$.
	\end{itemize}

	Observe that the edges incident with $x_1$ have edge-weights from $3$ to $n+1$.
	Furthermore, the edges incident with $x_2$, except for the edge $x_1x_2$,
	have edge-weights ranging from $n+2$ to $2n-1$. All these weights appear exactly once
	and thus we are done.
\qed
\end{proof}

\subsection{Fan graphs}

In the previous part we showed a necessary condition for graph to have a
canonical ESD labeling. The point of this part is to show that this condition
is not sufficient in general by proving that \emph{fan graphs}, which have
$2n-3$ edges, do not have a canonical ESD labeling if their order is bigger
than 8.

\begin{definition}
	A \emph{fan graph} $F_n$ is a path $P_{n-1}$ and one other vertex $v$ (we call
	it the \emph{central vertex}) joined by an edge with every vertex of the path.
	See Figure \ref{fig:vejar} for an example.
\end{definition}

\begin{figure}
\centering
\includegraphics[scale=0.8]{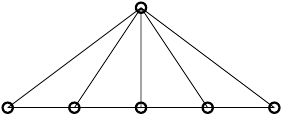}
\caption{A fan graph $F_6$.}
\label{fig:vejar}
\end{figure} 

\begin{theorem} \label{thm:fan}
	A fan graph $F_n$ does not have a canonical ESD labeling if and only if $n \ge 8$.
\end{theorem}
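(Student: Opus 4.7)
The equivalence splits into two directions. For $n \in \{2, \ldots, 7\}$, I will exhibit explicit canonical ESD labelings by inspection; for example, $F_7$ is labeled canonically by $\phi(v) = 4$ with the path vertices labeled $3, 1, 2, 6, 7, 5$ in order, yielding the full set $\{3, 4, \ldots, 13\}$ of edge-weights. Short analogous witnesses handle $F_2, \ldots, F_6$.

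For $n \ge 8$, suppose for contradiction that $\phi$ is a canonical ESD labeling. Since every edge-weight lies in the $(2n-3)$-element set $\{3, \ldots, 2n-1\}$ and $|E(F_n)| = 2n-3$, every integer in that set must appear as an edge-weight exactly once. Writing $c := \phi(v)$, the $n-1$ edges incident to $v$ realize precisely the weights $\{c+1, \ldots, c+n\} \setminus \{2c\}$, so the $n-2$ path edges realize the complementary subset of $\{3, \ldots, 2n-1\}$. The structural observation I will rely on is the following: for $c \ge 3$ the weight $3$ forces $\{1,2\}$ onto the path, and for $c \ge 4$ the weight $4$ also forces $\{1,3\}$ onto the path, so label $1$ is then an internal path vertex whose two path neighbors carry labels $2$ and $3$. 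Symmetrically, for $c \le n-3$, label $n$ is an internal path vertex with path neighbors labeled $n-1$ and $n-2$.

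The proof then collapses into two cases. If $c \ge 5$: label $1$ is sandwiched between $2$ and $3$ (since $c \ge 4$). Weight $5$ must then be realized on a path edge, with only options $\{1,4\}$ and $\{2,3\}$, both forbidden by the sandwich (label $1$ has no room for a third path neighbor, and labels $2, 3$ sit at path distance two). If $c \le 4$: then $c \le n-4$ since $n \ge 8$, so weight $2n-3$ falls outside $v$'s range and must lie on a path edge; its only realizations are $\{n-3,n\}$ and $\{n-1,n-2\}$, and both contradict the sandwiching of label $n$ by the symmetric argument. Either case yields the contradiction.

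The main obstacle is identifying the right pair of extremal weights and the correct dichotomy for $c$. The bound $n \ge 8$ is exactly the threshold at which the weight-$(2n-3)$ argument bites for $c = 4$, which in turn explains why the explicit constructions can only be pushed up to $n = 7$; any slicker unified argument would still have to respect this sharp transition.
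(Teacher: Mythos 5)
Your proof is correct and rests on the same core facts as the paper's: the three smallest weights $3,4,5$ and the three largest weights $2n-1,2n-2,2n-3$ have forced realizations involving only the labels $\{1,2,3,4\}$ and $\{n-3,\ldots,n\}$ respectively, and the fan's structure (path vertices have at most two path neighbours, non-consecutive path vertices are non-adjacent) makes it impossible to accommodate both ends once $n\ge 8$. The only difference is organizational — you case on the central label $c$ and kill one extreme weight, whereas the paper shows both forced subgraphs must contain the central vertex yet are vertex-disjoint — so this is essentially the paper's argument in a slightly more explicit form.
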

\begin{proof}
Note that $F_n$ for $n$ up to 7 has a canonical ESD labeling, as we can see on Figure \ref{fig:vejar_male}. It is obvious that $F_2$ and $F_3$ have canonical ESD labelings.

From Theorem \ref{thm:nec} we know that we have at most $2n-3$ different
edge-weights. Since a fan graph of order $n$ has exactly $2n-3$ edges we need
to use every possible edge-weight from the set $\{3,\ldots,2n-1 \}$ exactly once.

The edge-weights 3 and 4 can be obtained in exactly one possible way. In the first
case on an edge with endpoints labeled 1 and 2, in the second case on an edge
with endpoints 1 and 3. The edge-weight 5 can be obtained in two ways. Either
as the weight of an edge with endpoints $2$ and $3$ or as the weight
on an edge with endpoints $1$ and $4$. We get two possible subgraphs $S_1$ and
$S_2$.

By a similar analysis, one can get the labeled subgraphs $S_3$ and $S_4$.

Hence, exactly one of the labeled subgraphs $S_1$ or $S_2$ has to be in $F_n$
and, analogously, one of the $S_3$ and $S_4$ as well.
However, in all graphs $S_i, i \in \{1,\ldots,4 \}$, one of its vertices has
to be the central vertex. Since $n \ge 8$, we see that the minimum possible
label in $S_3$ and $S_4$ is $5$. Also, the maximum label on $S_1$ and $S_2$ is 4 . Therefore, we cannot properly label the central
vertex and the theorem follows.
\qed
\end{proof}

\begin{figure}
\centering
\includegraphics[scale=0.95]{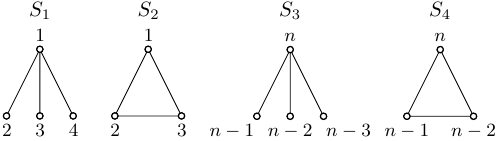}
\caption{The subgraphs from the proof of Theorem \ref{thm:fan}.}
\label{fig:vejar-sit}
\end{figure} 

\begin{figure}
\centering
\includegraphics[scale=0.95]{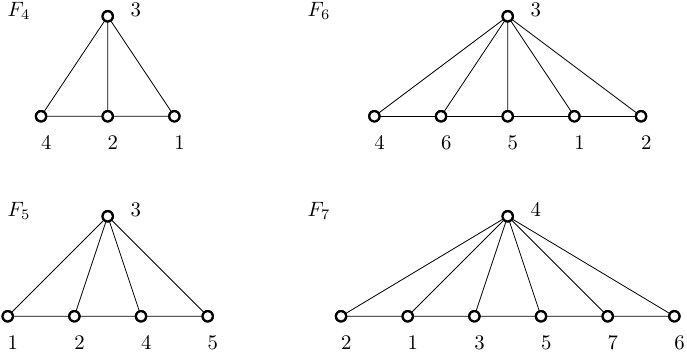}
\caption{Canonical ESD labelings for $F_4, F_5, F_6$, and $F_7$.}
\label{fig:vejar_male}
\end{figure} 

\subsection{Complete bipartite graphs}

We need to introduce a notion of isomorphism for vertex labelings.

\begin{definition}
	Vertex labelings $\phi_1$ and $\phi_2$ on $G$ are isomorphic if there
	exists an automorphism $f$ of $G$ such that $\phi_1(v) = \phi_2(f(v))$ for
	every $v \in V(G)$.
\end{definition}

We will prove the following theorem, covering all cases for complete
bipartite graphs.

\begin{theorem} \label{thm:kqr}
	Let $K_{p,q}$ be a complete bipartite graph on $n=p+q$ vertices, $p \leq q$, then the following holds.
	\begin{enumerate}
		\item For $p, q > 2$ there is no ESD labeling on $K_{p,q}$.
		\item If $p = 2$, then there exists exactly one possible ESD labeling
		up to isomorphism.
		\item If $p = 1$, then every canonical labeling is an
		ESD labeling.
	\end{enumerate}
\end{theorem}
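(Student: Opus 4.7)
The plan splits naturally across the three parts. Part (3) is immediate: every edge of $K_{1,q}$ shares the unique center vertex, so if that vertex carries label $c$ and the leaves carry $\ell_1, \ldots, \ell_q$, the edge-weights $c + \ell_i$ are automatically distinct by the injectivity of the labeling.

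For part (1), I would first invoke Theorem~\ref{thm:nec}: the condition $pq \le 2(p+q)-3$ rearranges to $(p-2)(q-2) \le 1$, which fails for every $p, q \ge 3$ except $p = q = 3$. The remaining case $K_{3,3}$ satisfies $|E| = 2|V|-3 = 9$, so each weight in $\{3, \ldots, 11\}$ must be realized exactly once. Since $3, 4, 10, 11$ admit only the decompositions $1+2$, $1+3$, $4+6$, $5+6$, label $1$ must lie in the part opposite $\{2, 3\}$ and label $6$ in the part opposite $\{4, 5\}$; a part-size count then forces the bipartition $\{1, 4, 5\} \sqcup \{2, 3, 6\}$ (otherwise one part would contain the four labels $\{2, 3, 4, 5\}$), after which $7 = 1+6 = 3+4 = 2+5$ is realized three times, a contradiction.

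For part (2), existence is already supplied by the labeling exhibited in the proof following Theorem~\ref{thm:nec}: small part $\{1, n\}$ and large part $\{2, \ldots, n-1\}$, which splits the edge-weights into the disjoint sets $\{3, \ldots, n\}$ and $\{n+2, \ldots, 2n-1\}$. For uniqueness, $|E| = 2n-4$ leaves exactly one unused value in $\{3, \ldots, 2n-1\}$; writing $A = \{a_1, a_2\}$ for the label-set of the small part, I would case-analyze the positions of $\{1, 2, n-1, n\}$. The principal branches reduce to explicit collisions or range overlaps of the shifted sets $a_1 + B$ and $a_2 + B$: $A = \{1, 2\}$ produces the overlap $\{5, \ldots, n+1\}$; $A = \{1, n-1\}$ yields $1 + n = (n-1) + 2 = n+1$; and $A = \{1, n\}$ is the target. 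The remaining configurations with $1 \in B$ are handled via the ESD-preserving duality $v \mapsto n+1-v$ (which maps each weight $w$ to $2(n+1)-w$), reducing the case $n \in A$ to the analysis with $1 \in A$; the case $1, n \in B$ requires a direct argument forcing $A \in \{\{2, n-1\}, \{2, 3\}, \{n-2, n-1\}\}$ by tracking the weights $3, 4, 2n-2, 2n-1$, and each of these exhibits an overlap for $n \ge 5$, while for $n = 4$ the configuration $\{2, 3\}$ is isomorphic to $\{1, 4\}$ via the part-swap automorphism of $K_{2,2}$.

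The critical subcase is $A = \{1, x\}$ with $x \in \{3, \ldots, n-2\}$ and $n-1, n \in B$: the weight $n+1 = 1 + n = x + (n+1-x)$ forces $x = (n+1)/2$ (hence $n$ odd), and a direct comparison of $1 + B$ and $x + B$ still exhibits a common element in the range $\{(n+5)/2, \ldots, n\}$, which is nonempty for every $n \ge 5$. This is the main obstacle: the pigeonhole on weight $n+1$ only rules out candidates with $x \ne (n+1)/2$, so closing the residual balanced configuration requires a sharper interval-overlap argument rather than a further pigeonhole.
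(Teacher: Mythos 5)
Your proposal is correct, but for parts (1) and (2) it takes a genuinely different and noticeably heavier route than the paper's. For (1) the paper argues uniformly with no edge count at all: if each part contains two consecutively labeled vertices, the two ``crossed'' edges collide; otherwise some part $P$ contains no consecutive pair of labels, so for any two labels of $P$ below $n$ the incremented labels must lie in $Q$, producing the same collision. Your detour through Theorem~\ref{thm:nec} is exactly the alternative the authors mention and set aside as less transparent, and it costs you a separate treatment of the boundary case $K_{3,3}$ (which you dispatch correctly via the uniquely decomposable weights $3,4,10,11$ and the triple occurrence of $7$). For (2) the paper's uniqueness proof is a compact two-case analysis on whether the two small-part labels are consecutive: if they are, the large part still contains a consecutive pair once $n>4$; if not, then unless the small part is $\{1,n\}$ both labels can be shifted by $+1$ (or both by $-1$) into the large part, and the two shifted edges collide at weight $a_1+a_2\pm 1$. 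The mechanism is the same as yours --- a collision exists precisely when $B$ contains two labels differing by $a_2-a_1$ --- but by colliding at $a_1+a_2+1$ instead of at $n+1$ the paper never meets your residual balanced configuration $x=(n+1)/2$, which you then have to kill with the interval-overlap computation. One small imprecision, not a gap: in the subcase $1,n\in B$ your weight-tracking does not ``force $A\in\{\{2,n-1\},\{2,3\},\{n-2,n-1\}\}$'' --- for $n\ge 6$ the labels $2,3,n-2,n-1$ are pairwise distinct and at least three of them would have to lie in the two-element set $A$, an outright contradiction, so the finite list is only relevant for $n\le 5$; since every listed candidate is eliminated anyway, nothing is lost. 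Both your argument and the paper's leave implicit the final observation that all labelings with small part $\{1,n\}$ are mutually isomorphic under the automorphisms of $K_{2,q}$.
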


\begin{proof}
\begin{enumerate}
	\item Suppose for a contradiction that we have some
	canonical ESD labeling $\phi$. Denote the parts of $K_{p,q}$ by $P$ and $Q$.
	We will divide the proof into two cases.
	\begin{itemize}
		\item There exist two vertices $v_1, v_2,$ in $P$
		 and two vertices $u_1,u_2$ in $Q$ such that
		$\phi(v_2) = \phi(v_1) +1$ and $\phi(u_2) = \phi(u_1)+ 1.$ 
		Then $w_\phi(v_1u_2) = w_\phi(v_2u_1)$, and we get a contradiction.

		\item There exists a part (without loss of generality $P$) such that $\phi(v_1) \neq \phi(v_2)+1$ for every $v_1,v_2 \in P$.
		Since $P$ is of size at least 3, there exist two vertices $v'_1, v'_2 \in P$ with labels smaller than $n$. Thus there exists a vertex $u_1 \in Q$ with label $\phi(v'_1)+1$ and $u_2 \in Q$ with label $\phi(v'_2)+1$.
		Then $w_\phi(v'_1u_2) = w_\phi(v'_2u_1)$, a contradiction.
	\end{itemize}

	\item We denote the vertices of the part of the size 2 as $v_1,v_2$. The vertices of the other part will be $u_1,\ldots,u_q$. Let $\psi$ be 
	a vertex labeling of $K_{2,q}$ defined as follows:
	\begin{itemize}
		\item $\psi(v_1) = 1$,
		\item $\psi(v_2) = n$,
		\item $\psi(u_i) = i+1$ for $i \in \{1,\ldots,q \}$.
	\end{itemize}
	Observe that $\psi$ is indeed a canonical ESD labeling.
	For $q = 2$, one can easily check that this is the only
	canonical ESD labeling up to isomorphism.

	Now, for a contradiction, assume that a canonical ESD
	labeling $\psi'$, nonisomorphic to $\psi$, exists.
	Furthermore, $n > 4$, and we can assume that $\psi'(v_1) < \psi'(v_2)$.
	Either $\psi'(v_1) \neq 1$ or $\psi'(v_2) \neq n$.
	We distinguish two cases.
	\begin{enumerate}
		\item It holds that $\psi'(v_2) = \psi'(v_1) + 1$.

		Since $n > 4$, we can find two vertices $a_1,a_2$ in the other part
		such that $\psi'(a_2) = \psi'(a_1) + 1$. Similarly as in case (1)
		of this theorem, $w_{\psi'}(v_1a_2) = w_{\psi'}(v_2a_1)$ and we get a contradiction.

		\item It holds that $\psi'(v_2) \neq \psi'(v_1) + 1$.

		Then there exist two distinct vertices $u_j,u_k \in \{u_1,\ldots,u_q \}$
		such that one of the following holds.
		Either $\psi'(u_j) = \psi'(v_1) + 1$ and $\psi'(u_k) = \psi'(v_2) + 1$, or
		$\psi'(u_j) = \psi'(v_1) - 1$ and $\psi'(u_k) = \psi'(v_2) - 1$.
		In both cases $w_{\psi'}(v_1u_k) = w_{\psi'}(v_2u_j)$ and we are done.
	\end{enumerate}
	We conclude that no such $\psi'$ exists.

	\item Every edge in a canonical labeling of $K_{1,q}$ has a unique sum since every edge is incident to the central vertex of degree $q$.
\end{enumerate}
\qed \end{proof}

We note that the first part of Theorem \ref{thm:kqr} can be proved
by using Theorem  \ref{thm:nec} but we think that our proof is more clear.

\subsection{Trees}

We already showed that paths and stars are ESD graphs. The following theorem
solves the general case of trees.

\begin{theorem} \label{thm:trees}
Every tree has a canonical ESD labeling.
\end{theorem}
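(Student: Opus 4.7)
My plan is to give a direct construction via breadth-first search, without any induction. Root $T$ at an arbitrary vertex $r$, perform BFS from $r$, and assign the labels $1,2,\ldots,n$ to the vertices in the order in which they are enqueued; write $v_i$ for the vertex receiving label $i$, so $v_1 = r$ and $\phi(v_i)=i$. Because $T$ is a tree, every non-root $v_i$ has a unique neighbor among the previously enqueued vertices, namely its BFS-parent $v_{p_i}$, and every edge of $T$ therefore has the form $v_i v_{p_i}$ for some $i \ge 2$, with edge-weight $i+p_i$.

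The heart of the argument is the claim that the sequence $p_2, p_3, \ldots, p_n$ is non-decreasing. I would justify this directly from the FIFO discipline of BFS: vertices are dequeued in the order of their labels, and when $v_k$ is dequeued, all of its BFS-children are enqueued consecutively and receive a contiguous block of fresh labels, every one of which has parent label $k$; the next vertex dequeued is $v_{k+1}$, whose children then contribute the next block with parent label $k+1$, and so on. Hence as $i$ grows from $2$ to $n$, the value $p_i$ can only stay the same or increase.

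With this monotonicity in hand, whenever $2 \le i < j \le n$ we have $i < j$ and $p_i \le p_j$, so
$$w_\phi(v_i v_{p_i}) \;=\; i+p_i \;<\; j+p_j \;=\; w_\phi(v_j v_{p_j}).$$
The $n-1$ edge-weights are therefore strictly increasing in $i$ and in particular pairwise distinct, so $\phi$ is a canonical ESD labeling of $T$.

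I do not foresee a real obstacle here: the construction is explicit and the required inequality is an easy monotonicity statement. The one step that deserves a careful word is the non-decreasing property of $(p_i)_{i \ge 2}$, but this is essentially a restatement of the BFS queue rule, together with the fact that in a tree the BFS-parent of each non-root vertex is unique.
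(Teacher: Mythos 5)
Your proof is correct and uses the same construction as the paper: label the vertices $1,\ldots,n$ in BFS order and observe that the edge-weights $i+p_i$ are strictly increasing in $i$. The only difference is cosmetic --- you derive this from the monotonicity of the parent-label sequence $(p_i)$, which follows cleanly from the FIFO queue discipline, whereas the paper verifies the same inequality by a short case analysis on the BFS levels of the endpoints.
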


\begin{proof}
Let $T$ be an $n$-vertex tree with root in $v_1 \in V(T)$.
We will denote by $v_1,\ldots,v_n$ an ordering of vertices visited
in a breadth-first search on $T$, starting in $v_1$.
We define a labeling $\phi$ as $\phi(v_k) := k, \forall v_k \in V(T)$. We
want to show that $\phi$ is a canonical ESD labeling.

Consider some vertex $v_i, i>1,$ and its parent $v_j$. Denote by $T'$ the
tree induced by vertices $v_1,\ldots,v_{i-1}$. See Figure \ref{fig:tree}
for an illustration. We claim that the following
holds:
$$w_\phi(v_iv_j) > w_\phi(v_av_b),\, \forall v_av_b\in E(T').$$

By the level of a vertex we mean its distance to root vertex $v_1$.
Without loss of generality, assume that $a < b$. We distinguish these cases.
\begin{itemize}
	\item The edge $v_av_b$ has both endpoints on a level lower or equal
	to the level of $v_j$. Then $a < j$ and $b < i$ and from that $a+b < i+j$.
	\item If $v_a = v_j$, then $v_j$ is the common parent of $v_b$ and $v_i$.
	Thus $b < i$ and from that $b+j < i+j$.
	\item The vertex $v_a$ is on the same level as $v_j$ and $v_a \neq v_j$.
	Then $a < j$ and $b < i$, implying that $a+b < i+j$. 
\end{itemize}

We proved the claim and the theorem follows.
\qed
\end{proof}

\begin{figure}
\centering
\includegraphics[scale=0.7]{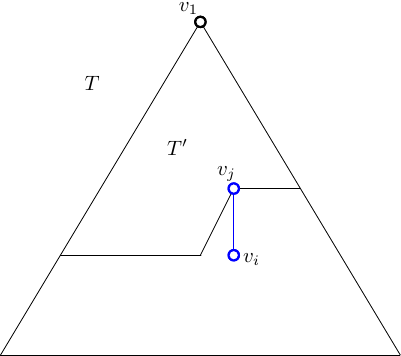}
\caption{An illustration of situation in Theorem \ref{thm:trees}.}
\label{fig:tree}
\end{figure} 

\subsection{Cycles}

\begin{theorem} \label{thm:cycles}
	Every cycle graph $C_n$ is an ESD graph.
\end{theorem}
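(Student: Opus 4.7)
The plan is to split by the parity of $n$, since the naive consecutive labeling works only when $n$ is odd.

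For the odd case, label the vertices around the cycle as $\phi(v_i) = i$. The $n-1$ edges $v_iv_{i+1}$ for $1 \le i \le n-1$ carry the weights $3, 5, 7, \ldots, 2n-1$, which are distinct odd numbers, while the closing edge $v_nv_1$ has weight $n+1$, which is even when $n$ is odd. So no clash occurs and we are done.

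For the even case, the naive labeling fails because $n+1$ becomes odd and collides with some $2k+1$. My proposed fix is to interleave: place the odd labels in increasing order along the first half of the cycle and then the even labels in decreasing order along the second half, namely
\begin{equation*}
\phi(v_1),\phi(v_2),\ldots,\phi(v_n) \;=\; 1,3,5,\ldots,n-1,\,n,n-2,\ldots,4,2.
\end{equation*}
The edge weights then split into four groups: consecutive odd-odd sums, which are the multiples of $4$ from $4$ to $2n-4$; consecutive even-even sums, which are the numbers $\equiv 2 \pmod 4$ from $6$ to $2n-2$; the middle edge $(n-1)+n=2n-1$; and the closing edge $2+1=3$. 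The first two groups together exhaust the even integers in $[4, 2n-2]$ exactly once, and the remaining two weights $3$ and $2n-1$ are odd and distinct from them, so all $n$ edge weights are pairwise different.

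The main obstacle is really just bookkeeping in the even case: one has to identify the correct interleaved pattern and check that the four residue classes modulo $4$ produced by the four blocks of edges partition the multiset of sums without overlap. Beyond that, both cases reduce to a direct arithmetic verification using the claim from Theorem~\ref{thm:nec} that any edge weight lies in $\{3,\ldots,2n-1\}$.
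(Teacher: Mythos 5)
Your proof is correct. The odd case is identical to the paper's: label consecutively and observe that the closing edge has even weight $n+1$ while all other weights are odd. For the even case, however, you take a genuinely different route. The paper keeps the consecutive labeling $1,\ldots,n-2$ and merely swaps the last two labels, setting $\phi(v_{n-1})=n$ and $\phi(v_n)=n-1$; the verification is then that the odd weights $3,5,\ldots,2n-5,2n-1$ are distinct and the two even weights $n$ and $2n-2$ differ since $n>2$. Your interleaved labeling $1,3,\ldots,n-1,n,n-2,\ldots,2$ instead produces four blocks of weights separated by residue classes: multiples of $4$ in $\{4,\ldots,2n-4\}$, numbers $\equiv 2 \pmod 4$ in $\{6,\ldots,2n-2\}$, and the two odd weights $3$ and $2n-1$; I checked the arithmetic and the four classes are indeed pairwise disjoint with no internal repetition. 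The paper's fix is a smaller perturbation and yields a shorter case check, while yours has the minor aesthetic payoff that the even weights tile the interval $[4,2n-2]$ exactly; both are equally valid. One small remark: your closing appeal to the range $\{3,\ldots,2n-1\}$ from Theorem~\ref{thm:nec} is not actually needed anywhere in the argument.
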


\begin{proof}
Let us denote the vertices of $C_n$ as $v_1,\ldots,v_n$ in a circular order. We
distinguish two cases:
\begin{enumerate}
	\item If $n$ is even, then we assign labels as follows:
	\begin{itemize}
		\item $\phi(v_i) = i$ for all $i \in \{1,\ldots,n-2\}$,
		\item $\phi(v_{n-1}) = n$,
		\item $\phi(v_n) = n - 1$.
	\end{itemize}
	Weights of the edges $v_iv_{i+1}$ for $i \in \{1,\ldots,n-3\}$
		are odd integers $3,5,\ldots,2n-5$. The weight of the edge
		$v_{n-1}v_n$ is $2n-1$ and therefore is odd as well. The remaining
		edges will be even; $w_\phi(v_nv_1) = n$ and $w_\phi(v_{n-2}v_{n-1}) = 2n-2$.
		We conclude that the edge-weights are unique.

	\item If $n$ is odd we assign labels as follows:
	\begin{itemize}
		\item $\phi(v_i) = i$ for all $i \in \{1,\ldots,n\}$.
	\end{itemize}
		The weights of the edges between $v_iv_{i+1}$ for $i \in \{1,\ldots,n-1\}$
		will be odd integers $3,5,\ldots,2n-1$. The weight of the edge
		$v_1v_n$ is equal to $n+1$ and therefore it is even. Again, all edge-weights
		are unique and we get a canonical ESD labeling.
\end{enumerate}
\qed
\end{proof}

\subsection{Generalized sunlet graphs}

We recall that a graph is \emph{unicyclic} if it contains exactly one cycle.

\begin{definition}
 A \emph{generalized sunlet graph} $S_k^p$ is a unicyclic graph obtained by taking a cycle graph
 $C_k$, with $V(C_k) = \{c_1,\ldots,c_k \}$, and joining path graphs $R_i, i \in \{1,\ldots,k \}$ of order $p$ to this cycle so that
 one of the endpoints of $R_i$ is identified with~$c_i$.
\end{definition}

\begin{theorem}
Let $S_k^p$ be a generalized sunlet graph.
If $k$  is odd and $p$ is even, then $S_k^p$ has a canonical ESD labeling. 
\end{theorem}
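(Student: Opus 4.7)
The plan is to exhibit an explicit canonical labeling that partitions the edges into two parity classes, one for the cycle and one for the rays, which makes the verification essentially a parity and ``block of consecutive integers'' argument.

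First I would index each ray $R_i$ by $i \in \{1,\dots,k\}$ and its vertices by $r_{i,1}=c_i, r_{i,2},\dots,r_{i,p}$ in order along the path, and define
\[
\phi(r_{i,j}) \;=\; (i-1)p + j, \qquad i\in\{1,\dots,k\},\; j\in\{1,\dots,p\}.
\]
So ray $R_i$ receives the block of $p$ consecutive labels $(i-1)p+1,\dots,ip$, and $\phi$ is a bijection from $V(S_k^p)$ onto $\{1,\dots,kp\}$. The cycle vertex $c_i$ gets the smallest label $(i-1)p+1$ of its block.

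Next I would compute the two families of edge-weights. A ray edge $r_{i,j}r_{i,j+1}$ has weight $2(i-1)p+2j+1$, which is odd, and as $j$ ranges over $\{1,\dots,p-1\}$ the values within one ray are $2(i-1)p+3, 2(i-1)p+5,\dots,2(i-1)p+(2p-1)$; these lie in the interval $[2(i-1)p+3,\,2ip-1]$, so the intervals for distinct $i$ are disjoint and all ray-edge weights are pairwise distinct. A cycle edge $c_ic_{i+1}$ (with $1\le i\le k-1$) has weight $(2i-1)p+2$, while the closing edge $c_1c_k$ has weight $(k-1)p+2$. Because $p$ is even, all cycle weights are even, and hence disjoint from the ray weights.

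It then remains to check that the $k$ cycle-weights are themselves distinct: the $k-1$ consecutive weights $(2i-1)p+2$ are strictly increasing in $i$, and the closing weight $(k-1)p+2$ would collide with some $(2i-1)p+2$ only if $i=k/2$, which is not an integer since $k$ is odd. Thus all $|E(S_k^p)|=kp$ edge-weights are distinct, and $\phi$ is a canonical ESD labeling. The only place ``ingenuity'' is required is choosing the block labeling so that parity separates the two edge classes; once that choice is made, every step is a direct calculation, and I do not expect any genuine obstacle.
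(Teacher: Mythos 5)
Your proposal is correct and is essentially identical to the paper's own proof: the same consecutive-block labeling (each ray, starting at its cycle vertex, gets a block of $p$ consecutive integers), the same parity separation (ray edges odd, cycle edges even since $p$ is even), and the same final check that the closing cycle edge's weight $(k-1)p+2$ cannot equal $(2i-1)p+2$ because $k$ is odd. No differences worth noting.
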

\begin{proof}
We denote the vertices of $S_k^p$ in the following way.
\begin{itemize}
\item Vertices on the cycle are $v_1, v_{p + 1}, v_{2p+1}, \ldots, v_{(k-1)p+1}$.
\item Vertices on the path joined to the vertex $v_{ip+1}$ are
consecutively\\ $v_{ip+1}, \ldots, v_{(i+1)p}$, for $1 \leq i \leq k$.
\end{itemize}

We define a vertex labeling $\phi$ as $ \phi(v_i) := i$. We claim that $\phi$
is a canonical ESD labeling. All edge-weights on attached paths are odd,
because we get them as a sum of two consecutive numbers. Furthermore, all
edge-weights on a path joined to vertex $v_{ip+1}$ are smaller than
edge-weights on a path joined to vertex $v_{(i+1)p+1}$. Thus all edge-weights on
paths are distinct.  All edge-weights on the cycle expect for the edge
$v_1v_{(k-1)p+1}$ are in the form $k'p+2$ for some $k' \in \N$. Thus they are all even
and distinct.

It remains to show that the edge $ v_1v_{(k-1)p+1}$ has an edge-weight
different from all others. For a contradiction we assume that the edge-weight
$(k-1)p+2$ was already used. It is even, so it can be only used on the cycle.
Thus, $k-1$ must be a sum of two distinct consecutive natural numbers. That
gives a contradiction, because $k-1$ is even.
\qed
\end{proof}

For the other parity conditions we were not able to prove that there is always
an ESD labeling. Thus we leave as an open problem to determine if all
generalized sunlet graphs have a canonical ESD labeling. Small examples
suggest that it might be true.

\begin{theorem}
Let $S_k^p$ be a generalized sunlet graph.
If $k$ and $p$ are odd or $k$ is even and $p$ is odd or even, then $S_k^p$ has an ESD labeling with label set $L$ of size $(p+1)k - 2$.
\end{theorem}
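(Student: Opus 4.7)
The plan is to construct an explicit ESD labeling that extends the canonical scheme from the preceding theorem by introducing $k-2$ unused labels (``gaps''). Since $(p+1)k - 2 = kp + (k-2)$, this is exactly the budget available. Concretely, I would assign to path $i$ a block of $p$ consecutive labels, placing path $i$ at $\{(i-1)(p+1)+1, \ldots, (i-1)(p+1)+p\}$ for $i \leq k-1$ and path $k$ at $\{(k-1)(p+1), \ldots, (p+1)k-2\}$. This inserts one gap between each pair of consecutive blocks for $i = 1, \ldots, k-2$ and no gap between blocks $k-1$ and $k$, using all $k-2$ extra labels while keeping the maximum label equal to $(p+1)k-2$.

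Within each block the cycle vertex $c_i$ is placed and the attached path oriented according to the parities of $k$ and $p$. For the first and last blocks, labeling the path monotonically with $c_1$ at the minimum label of its block and $c_k$ at the maximum works cleanly, just as in the proof of the preceding theorem. For the middle blocks the cycle vertex is placed at an interior position of its block, with the remaining labels arranged on the attached path so that the cycle-edge weight $c_i + c_{i+1}$ is shifted into a parity or range disjoint from the path weights. The exact interior position used depends on the parity case at hand, and can be chosen uniformly within each case since the blocks are translates of one another.

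The main places where the gap structure is really used are the wrap-around cycle edge $c_1 c_k$ and the last non-wrap cycle edge $c_{k-1} c_k$, which bridges the one pair of adjacent blocks sharing no gap. For $k$ even this wrap edge is precisely the obstruction to a canonical labeling, since under the canonical scheme $(k-1)p + 2$ coincides with the interior cycle-edge weight $(2 \cdot (k/2) - 1)p + 2$; the gaps shift it out of that collision. For $p$ odd the gaps additionally shift most interior cycle-edge weights from odd to even, so they automatically avoid the always-odd path weights. The verification then reduces to a case analysis on the three parity regimes $(k,p) = (\text{odd},\text{odd}), (\text{even},\text{odd}), (\text{even},\text{even})$: in each case a short arithmetic check confirms that the few remaining cycle-edge weights still land outside every path's weight range. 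I expect the $k$ odd, $p$ odd case to be the most delicate, since there both parities of cycle-edge weights coexist, and one must verify individually that the last non-wrap weight $c_{k-1}+c_k$ does not land on any of the odd path weights of block $k-1$ or block $k$.
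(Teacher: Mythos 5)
Your proposal is a genuinely different construction from the paper's, but as written it has a real gap: everything difficult is deferred to unspecified choices and an unexecuted ``short arithmetic check.'' The central problem is the treatment of the middle blocks. Since $c_i$ is an \emph{endpoint} of the attached path $R_i$, placing $c_i$ at an interior label of its block means the path can no longer be labeled by consecutive integers in order along the path; the first path edge $c_iu$ then has weight $\phi(c_i)+\phi(u)$ with no control on its parity or position, and the remaining path labels must be threaded around the hole left by $c_i$. You never specify this arrangement, so the claims that path weights within a block are distinct, stay odd, and occupy an interval disjoint from the neighboring blocks' intervals are all unverified --- and they are exactly what the whole argument rests on. A second omission: you argue that cycle-edge weights avoid path weights, but never that the $k$ cycle-edge weights $\phi(c_i)+\phi(c_{i+1})$ are pairwise distinct from \emph{each other}, which is not automatic once the $c_i$ sit at case-dependent interior positions. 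Until the interior positions and the path orderings are pinned down explicitly in each parity regime and these distinctness checks are carried out, this is a plan rather than a proof.

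For contrast, the paper avoids all of this with a short greedy argument: label the cycle with $1,\dots,k$ as in the cycle theorem (so all cycle-edge weights are at most $2k-1$), then repeatedly assign the next unused label, starting from $2k-1$, to an unlabeled vertex adjacent to the labeled vertex of minimum label. Each new edge-weight strictly exceeds every weight created so far, so distinctness is immediate and no parity case analysis is needed; the labels used are $1,\dots,k$ together with $2k-1,\dots,(p+1)k-2$, giving the claimed label set size. If you want to salvage your block construction, the honest comparison is that it would yield a more structured (nearly canonical) labeling at the cost of a delicate case analysis, whereas the greedy argument trades label economy within the range for a one-line correctness proof.
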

\begin{proof}
In both cases of parity of $k$, the unique cycle in $S_k^p$ will be labeled in
the same way as in Theorem \ref{thm:cycles}. Observe that the greatest edge-
weight on the edges of cycle is $2k-1$.

The rest of the vertices is labeled by the following procedure. Start with
label $i := 2k-1$ and label by $i$ an unlabeled vertex which is adjacent to
the vertex with the minimum label. Increment $i$ by one and repeat the step.
We see that in every step we get one new edge-weight. Furthermore, this
edge-weight is always greater than any previous edge-weight created during this
procedure and all these edge-weights are greater than any edge-weight on an
edge in the cycle. Thus, the resulting labeling is ESD. Furthermore, we labeled
the cycle with $k$ labels with $1,\ldots,k$ and then the remaining $p(k-1)$ vertices with labels
$2k-1,\ldots, (p-1)k - 2$. This implies that the set of labels $L$ is of size $(p-1)k-2$.
\qed
\end{proof}

\subsection{Grids}

\begin{definition}
A $k\times l$ grid graph $G_{k,l}$ is the Cartesian product of path graphs $P_k$ and $P_l$.
\end{definition}

\begin{theorem}
Let $G_{k,l}$ be a grid graph. If $k$ or $l$ is even then $G_{k,l}$ has a canonical ESD labeling.
\end{theorem}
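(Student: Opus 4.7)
The plan is to label the grid in the obvious row-major order and exploit a parity separation between horizontal and vertical edges. Assume without loss of generality that $l$ is even (the case of $k$ even is symmetric, by swapping the roles of the two factors). Denote the vertex in row $i$ and column $j$ by $v_{i,j}$, where $1 \le i \le k$ and $1 \le j \le l$, and define
\[
\phi(v_{i,j}) := (i-1)l + j.
\]
This is clearly a bijection onto $\{1,\ldots,kl\}$, so it is a canonical labeling.

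I would then compute the two types of edge-weights separately. For a horizontal edge $v_{i,j}v_{i,j+1}$ we get
\[
w_\phi(v_{i,j}v_{i,j+1}) = 2(i-1)l + 2j + 1,
\]
which is odd. For a vertical edge $v_{i,j}v_{i+1,j}$ we get
\[
w_\phi(v_{i,j}v_{i+1,j}) = (2i-1)l + 2j,
\]
which is even because $l$ is even. Hence no horizontal edge can share a weight with a vertical one.

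Next I would verify distinctness within each type. For horizontal edges, fixing $i$ and letting $j$ range over $\{1,\ldots,l-1\}$, the weights fill the arithmetic progression $2(i-1)l+3,\,2(i-1)l+5,\,\ldots,\,2il-1$; these consecutive blocks (indexed by $i$) are disjoint since the next block starts at $2il+3$. For vertical edges, fixing $i\in\{1,\ldots,k-1\}$ and letting $j$ range over $\{1,\ldots,l\}$, the weights fill $(2i-1)l+2,\,(2i-1)l+4,\,\ldots,\,(2i+1)l$, and the next block begins at $(2i+1)l+2$, again disjoint. Together with the parity argument this shows all edge-weights are pairwise distinct, so $\phi$ is a canonical ESD labeling.

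I do not expect any real obstacle here; the only thing to be slightly careful about is that the parity trick genuinely needs one of the two dimensions to be even, which matches the hypothesis of the theorem. The whole proof is essentially bookkeeping about two arithmetic progressions, so the main thing to get right is to present the case split and the two non-overlap checks cleanly.
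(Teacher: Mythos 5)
Your proof is correct and follows essentially the same approach as the paper: row-major labeling, the observation that horizontal edge-weights are odd while vertical ones are even (using the evenness of the row length), and the disjointness of the consecutive arithmetic-progression blocks. Your write-up is in fact somewhat more explicit about the computations than the paper's.
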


\begin{proof}
Without loss of generality assume that $k$, the number of
columns, is even.  Let us denote the vertices in the $i$-th row by  $v_{(i-1)k+1},
\ldots, v_{ik}$ for every $i \in \{1, \ldots, l\}$.  We define a canonical vertex
labeling $\phi$ as $\phi(v_i) := i$. We want to show that $\phi$ is an
 ESD labeling on $G_{k,l}$.

The graph $G_{k,l}$ with labeling $\phi$ has the following edge-weights:
\begin{itemize}
\item $2(i-1)k + 3, \ldots, 2ik-1$ in the $i$-th row for every $1 \leq i \leq l$,
\item $2ik + 3, \ldots, 2(i+1)k-1$ in the $(i+1)$-th row for every $0 \leq i \leq l-1$,
\item $(2i-1)k+2, \ldots, (2i+1)k$ on edges between the $i$-th and the $(i+1)$-th row $1 \leq i \leq l-1$. 
\end{itemize}
All edge-weights on rows are odd and all edge-weights in the $i$-th row are smaller than all edge-weights in the $(i+1)$-th row. A similar argument holds for all edge-weights in columns. This concludes the proof.
\qed
\end{proof}

\subsection{Complete graphs}

From Theorem \ref{thm:nec} it is clear that complete graphs $K_n$ for $n > 3$
do not have a canonical ESD labeling. However, the following theorem provides
a simple way how to find an ESD labeling. We recall that \emph{Fibonacci sequence}
is defined as $F_0 := 0, F_1 := 1$, and $F_n = F_{n-1} + F_{n-2}$ for $n > 1$.
We note that the following theorem implies that for any $n$-vertex graph,
$F_{n+1}$ labels suffice to construct an ESD labeling.

\begin{theorem}
	There exists an ESD labeling with $F_{n+1}$ labels for every complete graph $K_n$.
\end{theorem}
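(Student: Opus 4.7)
The plan is to use the Fibonacci numbers themselves as vertex labels. Concretely, I would assign to the $n$ vertices of $K_n$ (in any order, since $K_n$ is vertex-transitive) the $n$ labels $F_2, F_3, \ldots, F_{n+1}$, which form an $n$-element subset of $\{1, 2, \ldots, F_{n+1}\}$. The resulting map is injective and uses a label set of size $F_{n+1}$ as required by the statement, so it remains only to verify that all $\binom{n}{2}$ edge-sums are pairwise distinct.

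The key step, and the only nontrivial one, is to establish the Sidon-type property of the Fibonacci sequence from $F_2$ onwards: for indices $2 \leq a < b$ and $2 \leq c < d$ with $(a,b) \neq (c,d)$, one should have $F_a + F_b \neq F_c + F_d$. I would prove this by contradiction: assume equality and, without loss of generality, $b \geq d$. The subcase $b = d$ collapses immediately to $F_a = F_c$ and hence $a = c$, a contradiction. In the remaining subcase $b \geq d+1$, applying the recurrence $F_b \geq F_{d+1} = F_d + F_{d-1}$ together with $F_c \leq F_{d-1}$ (since $c \leq d-1$) and $F_a \geq 1$ yields $F_a + F_b \geq 1 + F_d + F_{d-1} > F_d + F_c$, contradicting the assumed equality.

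The main obstacle, to the extent that there is one, lies in careful indexing: one must start the labels at $F_2$ rather than $F_1$ so that all $n$ labels are distinct (recall that $F_1 = F_2 = 1$), and one must track strict versus non-strict inequalities so that the chain of bounds in the last case ends in a strict inequality. An alternative route would be to invoke Zeckendorf's theorem on the unique representation of positive integers as sums of non-consecutive Fibonacci numbers, but the recurrence-based argument above is more elementary and self-contained, and it extends verbatim to give the stated consequence that $F_{n+1}$ labels suffice for \emph{any} $n$-vertex graph (since any such graph is a subgraph of $K_n$ and the Sidon property is inherited).
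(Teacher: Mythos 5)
Your proposal is correct and uses the same construction as the paper: label the vertices of $K_n$ by $F_2,\ldots,F_{n+1}$. The paper verifies the edge-sum condition by induction on $n$ (every edge at the new vertex has weight at least $F_{n+1}+F_2 = F_n+F_{n-1}+1$, exceeding all earlier weights), which is the same key inequality as in your direct Sidon-type case analysis, so the two arguments differ only in packaging.
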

\begin{proof}
	For given $K_n$ we label its vertices $\{v_1, \ldots, v_n\}$ by function
	$\phi_n$, defined as $\phi_n(v_i) := F_{i+1}$.

	We show that this is an ESD labeling by induction. We see that for $K_1$
	and $K_2$, $\phi_1$ and $\phi_2$ are clearly ESD labelings. Now we want to
	prove that $\phi_n$ is an ESD labeling. We see that $v_1,\ldots,v_{n-1}$
	are labeled as in $\phi_{n-1}$. The largest possible sum on an edge in
	$\phi_{n-1}$ is $F_n+F_{n-1}$. The only new label in $\phi_n$ is $F_{n+1}$
	and the minimum possible sum on an edge incident with $v_n$ is
	$F_{n+1}+F_2 = F_n+F_{n-1}+1$. Thus, assuming that $\phi_{n-1}$ is an ESD
	labeling, $\phi_n$ is an ESD labeling as well.
\qed
\end{proof}

\section{Games with ESD labeling}

Tuza in his paper \cite{tuza2017graph} emphasized that only few papers on graph labeling games exist. He defined a new game from ESD labeling. 

\begin{definition}
	We call a vertex of graph \emph{free} if it is not labeled yet.
\end{definition}

\begin{definition}
Let $G=(V,E)$ be a graph and $L=\{1,\ldots,l \}$ its set of labels. Alice and Bob are two players
who alternate after every move. Alice starts. In each move,  player chooses a free vertex of $G$ and assigns
an unused label to it. The move is \emph{legal} if the resulting edge-weights are unique.

The game ends if there is no legal move possible or an ESD labeling is created. Alice wins if
an ESD labeling is created, otherwise Bob wins.

We say that an ESD labeling game is \emph{canonical} on $G$ if $|L| = |V(G)|$.
\end{definition}

One can also define other variants of this game. For example, Bob can be the starting one.
Also, our definition of game is a Maker-Breaker type of game, but it is possible to define
Achievement and Avoidance type of this game as well.

\begin{proposition} \label{thm:bob}
If a graph $G$ does not have a canonical ESD labeling then Bob has a winning strategy in
the canonical game on $G$.
\end{proposition}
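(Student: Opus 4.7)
The plan is that this proposition is almost a tautology once one reads the winning condition carefully; no actual strategy for Bob needs to be constructed beyond ``play any legal move.''

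First I would observe that the game must terminate in at most $|V(G)|$ moves, since each move labels a previously unlabeled vertex, and the label set in the canonical game has exactly $|V(G)|$ elements. In particular, the game cannot go on indefinitely.

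Next I would use the stated stopping rule: the game ends either (i) when no legal move is available or (ii) when an ESD labeling has been created. Case (ii) requires every vertex of $G$ to be labeled, and because the game is canonical the label set used is exactly $\{1,\ldots,|V(G)|\}$, so the resulting labeling would be a canonical ESD labeling of $G$. By hypothesis, $G$ admits no such labeling, so case (ii) never occurs regardless of how either player plays.

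Therefore the game must end in case (i), which by definition means Bob wins. Since this conclusion holds for \emph{every} sequence of legal moves by both players, any strategy for Bob — for instance, picking an arbitrary legal move on each of his turns — is a winning strategy. The only subtlety, and the one point where I would be careful in the write-up, is to justify that legality of individual moves cannot, by itself, force Alice's partial labeling to extend to a full ESD labeling: legal moves preserve the ESD property on the labeled subgraph, but the full labeling (if ever reached) would violate the hypothesis, so some move must eventually be illegal for whichever player is next to play.
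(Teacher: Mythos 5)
Your proposal is correct and is essentially the paper's argument: the paper's own proof is a one-liner observing that since no canonical ESD labeling of $G$ exists, Alice can never complete one, so Bob eventually wins. You simply spell out the termination and case analysis in more detail than the paper does.
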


\begin{proof}
If a graph $G$ does not have a canonical ESD labeling then Alice can not make any canonical ESD labeling and Bob eventually wins.
\qed
\end{proof}

\begin{theorem}
Alice wins every canonical game on a star $S_n$.
\end{theorem}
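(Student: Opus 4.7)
The plan is to leverage the observation already made in part~3 of Theorem~\ref{thm:kqr}: every canonical labeling of a star is automatically an ESD labeling, because every edge shares the central vertex. So the strategic content of the game collapses entirely. I would argue that Alice wins \emph{regardless} of strategy, i.e., \emph{every} play of the canonical game on $S_n$ ends in a win for Alice.

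Concretely, I would first identify the one vertex $c$ of degree $n-1$ (the center) and call the remaining vertices leaves. The key claim is that during any play, no move is ever illegal. To see this, consider any position where $c$ has already been assigned some label $\alpha$, and some set of leaves has been labeled with pairwise distinct values $\beta_1, \dots, \beta_k \in \{1,\dots,n\}\setminus\{\alpha\}$. The edges whose weight is determined at this point are precisely the edges $c\beta_i$, and their weights $\alpha + \beta_i$ are pairwise distinct simply because the $\beta_i$ are. If $c$ is still free, then no edge has both endpoints labeled, so the set of determined edge-weights is empty and the uniqueness condition is vacuous. In either case, assigning a fresh label to any free vertex keeps all edge-weights distinct, so the move is legal.

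Consequently the game cannot terminate under the ``no legal move possible'' clause; it must terminate with a completed canonical labeling. By part~3 of Theorem~\ref{thm:kqr}, that completed labeling is itself an ESD labeling, and Alice wins by definition. Thus Alice wins every canonical game on $S_n$, and in fact wins no matter how either player plays.

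There is essentially no obstacle: the only thing worth spelling out carefully is the observation that a \emph{partial} labeling of a star also has pairwise distinct determined edge-weights, so the legality check never fails mid-game. Everything else follows immediately from what has already been established for $K_{1,n-1}$ in Theorem~\ref{thm:kqr}.
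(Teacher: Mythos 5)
Your proposal is correct and matches the paper's own argument: both rely on part~3 of Theorem~\ref{thm:kqr} to conclude that every canonical labeling of a star is ESD, so Alice wins regardless of play. Your extra remark that partial labelings never create an illegal move is a welcome bit of care that the paper leaves implicit.
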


\begin{proof}
We already proved in Theorem \ref{thm:kqr} that every canonical vertex labeling on a star graph is edge-sum distinguishing. Thus Alice wins every game regardless on the course of the game.
\qed
\end{proof}

\begin{theorem}
Bob wins every canonical game on
a complete bipartite graph $K_{p,q}$,  $p \leq q$, where $p = 2$. 
\end{theorem}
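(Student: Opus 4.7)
The plan is to lean on Theorem \ref{thm:kqr}(2), which pins down ESD labelings of $K_{2,q}$ so rigidly that Bob only needs one well-chosen opening move to destroy every hope of completion. Write the parts as $P=\{v_1,v_2\}$ and $Q=\{u_1,\ldots,u_q\}$, and let Alice's first move place label $\ell_1$ on a vertex $w$. Bob's reply depends on $q$. When $q\geq 3$, Bob picks a vertex $v\in P\setminus\{w\}$---the other vertex of $P$ if $w\in P$, an arbitrary vertex of $P$ if $w\in Q$---and labels it with some $\ell_2\in\{2,\ldots,n-1\}\setminus\{\ell_1\}$; since $|\{2,\ldots,n-1\}|=q\geq 3$, at least two such labels are available. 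When $q=2$ Bob plays on $w'$, the unique non-neighbour of $w$ in $K_{2,2}$, and chooses $\ell_2$ so that the pair $\{\ell_1,\ell_2\}$ is neither $\{1,4\}$ nor $\{2,3\}$; for every value of $\ell_1$ exactly one of the three remaining labels is forbidden, so at least two valid choices for $\ell_2$ always exist.

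Next I would argue that this opening is simultaneously legal and fatal. Legality is immediate: before Bob moves at most one vertex is labeled, so no edge has both its endpoints labeled yet and no weight collision can arise. Fatality comes straight from Theorem \ref{thm:kqr}. In the $q\geq 3$ case the automorphism group of $K_{2,q}$ preserves both parts, so every ESD labeling literally has $P$ labeled $\{1,n\}$; but after Bob's move some vertex of $P$ carries a label in $\{2,\ldots,n-1\}$, so $P$'s label-set can never equal $\{1,n\}$. In the $q=2$ case the automorphism group additionally swaps the two parts, so the ESD labelings are exactly those inducing the bipartition $\bigl\{\{1,4\},\{2,3\}\bigr\}$ of the label set; however one whole side of $K_{2,2}$ is already labeled with a pair $\{\ell_1,\ell_2\}$ distinct from both $\{1,4\}$ and $\{2,3\}$, so this bipartition can never be reached either.

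To finish, observe that every legal move preserves distinctness of realized edge-weights, so any completely labeled graph produced by a sequence of legal moves is automatically an ESD labeling of $K_{2,q}$. Bob's opening rules out every ESD completion of the current partial labeling, so the game cannot reach a full labeling; hence at some later turn the active player has no legal move and, by the rules of the game, Bob wins regardless of how either player plays thereafter. The main obstacle is the degenerate case $q=2$: the extra automorphism interchanging the two parts forces Bob to block two ESD bipartitions at once, and obliges him to respond on the \emph{same} side as Alice rather than on the opposite side, unlike in the $q\geq 3$ case.
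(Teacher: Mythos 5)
Your proof is correct and follows the same basic idea as the paper: invoke Theorem~\ref{thm:kqr}(2) to see that any canonical ESD labeling of $K_{2,q}$ is forced to place $\{1,n\}$ on a part, and have Bob spend his first move making that impossible. The one genuine difference is your separate treatment of $q=2$, and it is an improvement: the paper's proof simply has Bob put a label $w$ with $1<w<n$ on the smaller part, but for $K_{2,2}$ the part-swapping automorphism means the ESD labelings are exactly those realizing the bipartition $\{\{1,4\},\{2,3\}\}$ of the labels, so Bob placing, say, $2$ on a vertex of $P$ does not block the completion with $P=\{2,3\}$ and $Q=\{1,4\}$ (which one checks is ESD). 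Your move for $q=2$ --- playing on the non-neighbour of Alice's vertex and completing that part to a pair other than $\{1,4\}$ or $\{2,3\}$ --- closes this gap, and your closing observation that a partial labeling with no ESD extension forces the game to stall before completion (so Bob wins by the rules) is the right way to finish; the paper leaves that step implicit.
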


\begin{proof}
We recall Theorem \ref{thm:kqr}. The graph $K_{p,q}$,  $p \leq q$, where $p = 2$, needs
to have labels $1$ and $p+q$ on the smaller part. Thus a winning strategy for Bob is
to assign a label $w$, such that $1 < w < p+q$, on a free vertex of the smaller part in
his first move. Now it is not possible to build a canonical ESD labeling and Bob wins.
\qed
\end{proof}

Tuza also asked \cite[Problem 3.1]{tuza2017graph} the following question:
Given $G=(V, E)$, for which values of $l$ can Alice win the edge-sum
distinguishing labeling game? We partially answer this question by the
following theorem.

\begin{theorem} \label{thm:odhad}
Let $G$ be a graph, $\Delta$ its maximum degree, and $L$ its set of labels. If $|L| \geq (\Delta^2+1)n + \Delta {n-1 \choose 2}$, then Alice has a wining
strategy.
\end{theorem}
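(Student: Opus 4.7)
The plan is to show that Alice wins by a purely greedy strategy: on each of her turns she picks any free vertex $v$ and any unused label $x$ such that setting $\phi(v)=x$ keeps all edge-weights pairwise distinct. Under the assumed lower bound on $|L|$, such a safe label will exist at every reachable state of the game, so neither player is ever stuck; the game must therefore run through all $n$ moves and terminate in an ESD labeling, which by definition is a win for Alice.

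The argument reduces to a union-bound count of the unsafe labels at a fixed free vertex $v$. Writing $V_L$ for the currently labeled vertices, a label $x$ is unsafe if either (i) $x$ is already used, contributing at most $n$ forbidden labels, or (ii) there exist a labeled neighbour $u\in N(v)\cap V_L$ and a labeled edge $ab\in E(G[V_L])$ with $x+\phi(u)=\phi(a)+\phi(b)$, i.e.\ $x=\phi(a)+\phi(b)-\phi(u)$. Parametrising (ii) by the pair $(u,ab)$ yields at most $|N(v)\cap V_L|\cdot|E(G[V_L])|\le\Delta\cdot|E(G[V_L])|$ unsafe labels. Two complementary upper bounds on $|E(G[V_L])|$ are available — the global size bound $\binom{n-1}{2}$, and the degree-sum bound $\Delta|V_L|/2\le\Delta n/2$. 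The stated threshold $(\Delta^2+1)n+\Delta\binom{n-1}{2}$ arises by using both in tandem: the first absorbs collisions with edges already present, contributing $\Delta\binom{n-1}{2}$ forbidden labels, and the second enters via a per-move accounting in which each of the $\le n$ previous moves introduces at most $\Delta$ new edge-weights, each poisoning at most $\Delta$ further labels on $v$, contributing $\Delta^2 n$ more; together with the $\le n$ labels of type (i) the total is as claimed. As soon as $|L|$ strictly exceeds this count, $v$ has a safe label.

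Since the bound is uniform across reachable states and free vertices, it applies equally on Bob's turn, so Bob is never stuck either and the game necessarily proceeds to a fully labeled terminal state, which is ESD by construction. The main obstacle is the bookkeeping in the union bound: the two natural estimates on $|E(G[V_L])|$ must be invoked in complementary ways so that they contribute additively to the bound rather than one merely duplicating the other. Once that is set up carefully, no cleverness from Alice is required beyond playing an arbitrary safe move, and the result falls out by pigeonhole at every state.
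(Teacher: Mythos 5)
Your proof is correct and follows essentially the same route as the paper's: a union bound on the labels forbidden at a free vertex (the used labels, plus one label for each pair consisting of a labeled neighbour and a labeled edge), showing that under the hypothesis every maximal play --- regardless of how Bob moves --- terminates in a full ESD labeling. The only quibble is that your two estimates for the type-(ii) labels bound the \emph{same} set of collisions, so adding $\Delta^2 n$ on top of $\Delta\binom{n-1}{2}$ is a harmless double count rather than a ``complementary'' contribution; your static count in fact establishes that the weaker hypothesis $|L| \ge n + \Delta\binom{n-1}{2}$ already suffices.
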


\begin{proof}	
For each vertex $v$ of $G$, define a set $S_v$ as the set of labels available for $v$.
In the beginning of every game, $S_v = L$ for every $v \in V(G)$.

Our goal is to build a winning strategy for Alice. In $k$-th move, a player
assigns to a free vertex $v$ some label $\phi(v) \in S_v$. We update
the set of labels in the following three steps right after the player's choice.

\begin{enumerate}
	\item We delete $\phi(v)$ from $S_u$ for every $u \in V(G)$. This label
	cannot be used twice, since ESD labeling is an injective mapping.
	\item For every free vertex $y$ incident to $v$ we delete all labels
	$l_{y,e}$ such that $l_{y,e} + \phi(v) = w_\phi(e)$ for some
	edge $e$ with both endpoint vertices labeled and incident with $v$. In this process, we delete at most ${k-1 \choose 2}$ labels
	from $S_y$.
	\item For every free vertex $z$ and for every vertex $z' \in N(z)$,
	such that $z'$ is already labeled, we delete from $S_z$
	all labels $l'$ such that
	$$l' + \phi(z') = w_\phi(vv'),\,\,\forall v' \in N(v).$$
	Within these steps, we delete at most $\Delta^2$ labels from label set of
	every free vertex.
\end{enumerate}
If the label set for every free vertex is nonempty before every move, Alice
wins. Let us count how many labels are deleted in course
of the game for every free vertex.
\begin{itemize}
	\item We delete at most $n-1$ labels through all first steps.
	\item We delete at most $\Delta {n-1 \choose 2}$ labels through all second
	steps.
	\item We delete at most $\Delta^2n$ labels in third steps.
\end{itemize}
Summarized, we delete at most $ (\Delta^2+1)n + \Delta {n-1 \choose 2} - 1$ labels.
If we have one extra label available, we can always find a label for a free vertex
and our bound is proved. Note an important fact that it does not matter how Bob
plays and the resulting labeling is ESD.
\qed
\end{proof}	

Observe that this theorem also gives us a bound on the size of label set for general
graphs. This follows by taking Proposition \ref{thm:bob} into account.

Also, by a similar analysis, one can obtain the following theorem for path graphs.

\begin{theorem}
	Let $P_n$ be a path graph on $n$ vertices. If $|L| \geq 5n$, then Alice
	wins every game on $P_n$.
\end{theorem}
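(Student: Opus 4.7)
The plan is to mimic Alice's greedy strategy from Theorem \ref{thm:odhad}: she maintains a set $S_v \subseteq L$ of currently admissible labels for every free vertex $v$, updated after each move by the three deletion rules appearing in that proof. Her strategy succeeds as long as, before each of her turns, the pool of some free vertex is nonempty; since Alice may choose which vertex to label, it suffices to show that no single pool $S_w$ ever empties. I would therefore upper-bound the total number of labels ever removed from a fixed pool $S_w$ over the whole game, specializing the counting to $P_n$ using $\Delta = 2$ and $|E(P_n)| = n-1$.

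For Step 1, each move removes at most one label from $S_w$, giving at most $n-1$ deletions in total. For Step 2 (a future edge at $w$ conflicting with an already completed edge $e$), observe that $w$ is implicated only on those moves where the just-labeled vertex $v$ lies in $N(w)$, and $|N(w)| \leq \Delta = 2$. Each edge $e$ of $P_n$ therefore causes at most two Step 2 deletions from $S_w$, and since $|E(P_n)|=n-1$ this step removes at most $2(n-1)$ labels.

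The key refinement is in Step 3, where the generic bound $\Delta^2 n = 4n$ per pool must be tightened. I would reorganize the double sum that counts Step 3 deletions from $S_w$: instead of grouping by move (yielding $\Delta^2$ per move), I group by a labeled neighbor $z'$ of $w$. For a fixed such $z'$, its Step 3 contributions across the subsequent moves labeling some $v$ are exactly equal to the number of already-labeled neighbors of $v$ at each of those moves, i.e.\ the number of edges that are newly completed by that move. Summed over all later moves this telescopes to at most $|E(P_n)| = n-1$. Multiplying by the $|N(w)| \leq \Delta = 2$ choices of $z'$ gives at most $2(n-1)$ Step 3 deletions.

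Combining the three bounds yields at most $(n-1) + 2(n-1) + 2(n-1) = 5(n-1) < 5n \leq |L|$, so $S_w$ stays nonempty throughout, Alice can always play a legal move, and the resulting labeling is an ESD labeling regardless of Bob's moves. The main obstacle is the Step 3 reindexing: one must spot that the inner factor telescopes to a total edge count, since the naive $\Delta^2 n$ estimate combined with Steps 1 and 2 already gives roughly $7n - 3$ and misses the claimed $5n$ bound.
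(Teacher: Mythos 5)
Your proof is correct and is precisely the ``similar analysis'' of Theorem~\ref{thm:odhad} that the paper alludes to without writing out: the paper gives no explicit proof for $P_n$, and your specialization supplies the missing details. In particular, you correctly identify that naively substituting $\Delta=2$ into the general bound still leaves a quadratic term from Step~2, and your reorganized counts --- at most one deletion per pair (completed edge, labeled neighbor of $w$), charged to whichever of Steps~2 and~3 applies, giving $2(n-1)$ each --- yield the required total of $5(n-1)<5n$ deletions per pool.
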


\section{Concluding remarks}

We studied a new type of graph labeling, introduced by Tuza, which  is similar
to magic (and antimagic) labelings, harmonious labelings and has a relation to
the Sidon sequences. We would like to highlight our main results.

\begin{itemize}
	\item We proved that trees, cycles and complete bipartite graphs with one part of size 2 have a canonical ESD labeling.
	\item We proved that in some cases grid graphs and generalized sunlet graphs
	do have a canonical ESD labeling.
	\item We showed that
	fan graphs and complete bipartite graphs with both parts of size at least 3 do not have a canonical ESD labeling.
	\item We studied a Maker-Breaker type of game, applied our previous results and
	derived a general bound on number of labels such that Maker wins the game.
\end{itemize}

\paragraph{Open problems.} Aside from Tuza's original game-oriented
problems proposed in \cite{tuza2017graph}, we emphasize the following question,
arising from the results in this paper.

\begin{problem}
What is the maximum possible number of edges for $n$-vertex connected graphs
so that every graph with such number of edges has a canonical ESD labeling?
\end{problem}

From Theorem \ref{thm:trees} we see that to answer this question one needs to
resolve the case of unicyclic graphs which is now only partially solved.

\section*{Acknowledgment}

Both authors were supported by the grant SVV–2017–260452. Jan Bok was
supported by the Center of Excellence - ITI (P202/12/G061 of GA\v{C}R). Nikola
Jedličková was supported by the Student Faculty Grant of the Faculty of
Mathematics and Physics, Charles University.

Both authors would like to thank Robert Šámal for his feedback and suggestions
regarding the paper. We would also like to thank the anonymous referee for
valuable advices that led to substantial improvements of the paper.

\bibliographystyle{plain}
\bibliography{bibliography}

\begin{thebibliography}{10}

\bibitem{bavca2007edge}
M.~Ba{\v{c}}a, Y.~Lin, M.~Miller, and M.~Z. Youssef.
\newblock Edge-antimagic graphs.
\newblock {\em Discrete Mathematics}, 307(11-12):1232--1244, 2007.

\bibitem{dynamic}
J.~A. Gallian.
\newblock A dynamic survey of graph labeling.
\newblock {\em The Electronic Journal of Combinatorics}, pages 1--439, 2016.

\bibitem{graham1980additive}
R.~L. Graham and N.~J.~A. Sloane.
\newblock On additive bases and harmonious graphs.
\newblock {\em SIAM Journal on Algebraic Discrete Methods}, 1(4):382--404,
  1980.

\bibitem{guy2013unsolved}
R.~Guy.
\newblock {\em Unsolved problems in number theory}, volume~1.
\newblock Springer Science \& Business Media, 2013.

\bibitem{hale1980}
W.~K. Hale.
\newblock Frequency assignment: Theory and applications.
\newblock {\em Proceedings of the IEEE}, 68(12):1497--1514, 1980.

\bibitem{jha2000}
P.~K. Jha.
\newblock Optimal ${L}(2, 1)$-labeling of cartesian products of cycles, with an
  application to independent domination.
\newblock {\em IEEE Transactions on Circuits and Systems I: Fundamental Theory
  and Applications}, 47(10):1531--1534, 2000.

\bibitem{kotzig1970magic}
A.~Kotzig and A.~Rosa.
\newblock Magic valuations of finite graphs.
\newblock {\em Canad. Math. Bull}, 13(4):451--461, 1970.

\bibitem{kotzig1972magic}
A.~Kotzig and A.~Rosa.
\newblock Magic valuations of complete graphs.
\newblock {\em Centre de Recherches Mathematiques, Universite de Montreal},
  1972.

\bibitem{o2004complete}
K.~O'Bryant.
\newblock A complete annotated bibliography of work related to {S}idon
  sequences.
\newblock {\em The Electronic Journal of Combinatorics}, 1000:DS11--Jul, 2004.

\bibitem{sugeng2013construction}
S.~Rahmawati, S.~Sugeng, D.~R. Silaban, M.~Miller, and M.~Bača.
\newblock Construction of new larger (a,d)-edge antimagic vertex graphs by
  using adjacency matrices.
\newblock {\em Australasian Journal of Combinatorics}, 56:257--272, 2013.

\bibitem{rosa1967}
A.~Rosa.
\newblock On certain valuations of the vertices of a graph.
\newblock In {\em Theory of Graphs (Internat. Symposium, Rome, July, 1966)},
  pages 349--355. Gordon and Breach, N. Y. and Dunod Paris, 1967.

\bibitem{sidon1932satz}
S.~Sidon.
\newblock Ein {S}atz {\"u}ber trigonometrische {P}olynome und seine {A}nwendung
  in der {T}heorie der {F}ourier-{R}eihen.
\newblock {\em Mathematische Annalen}, 106(1):536--539, 1932.

\bibitem{simanjuntak2000two}
R.~Simanjuntak, F.~Bertault, and M.~Miller.
\newblock Two new $(a,d)$-antimagic graph labelings.
\newblock In {\em Proc. of Eleventh Australasian Workshop on Combinatorial
  Algorithms}, volume~11, pages 179--189, 2000.

\bibitem{tuza2017graph}
Z.~Tuza.
\newblock Graph labeling games.
\newblock {\em Electronic Notes in Discrete Mathematics}, 60:61--68, 2017.

\bibitem{vangraph}
J.~Van~den Heuvel, R.~A. Leese, and M.~A. Shepherd.
\newblock Graph labeling and radio channel assignment.
\newblock {\em Journal of Graph Theory}, 29(4):263--283, 1998.

\bibitem{wallis2001magic}
Walter~Denis Wallis.
\newblock {\em Magic graphs}.
\newblock Springer Science \& Business Media, 2001.

\bibitem{west2001introduction}
D.~B. West.
\newblock {\em Introduction to graph theory}.
\newblock Prentice hall Upper Saddle River, 2001.

\end{thebibliography}

\end{document}